\newtheorem{theorem}{Theorem}
\newtheorem{definition}{Definition}
\newtheorem{proof}{Proof}
\newtheorem{example}{Example}
\newtheorem{remark}{Remark}
\newtheorem{acknowledgements}{Acknowledgements}
\begin{document}

\title{Bolzano's Infinite Quantities}

\author{Kate\v{r}ina Trlifajov\'{a}}

\maketitle

\begin{abstract}
In his \emph{Foundations of a General Theory of Manifolds}, Georg Cantor praised Bernard Bolzano as a clear defender of actual infinity who had the courage to work with infinite numbers. At the same time, he sharply criticized the way Bolzano dealt with them. Cantor's concept was based on the existence of a \emph{one-to-one correspondence}, while Bolzano insisted on Euclid's Axiom of \emph{the whole being greater than a part}. Cantor's set theory has eventually prevailed, and became a formal basis of contemporary mathematics, while Bolzano's approach is generally considered a step in the wrong direction. 

In the present paper, we demonstrate that a fragment of Bolzano's theory of infinite quantities retaining the \emph{part-whole principle} can be extended to a consistent mathematical structure. It can be interpreted in several possible ways. We obtain either a linearly ordered ring of finite and infinitely great quantities, or a a partially ordered ring containing infinitely small, finite and infinitely great quantities. These structures can be used as a basis of the infinitesimal calculus similarly as in Non-standard Analysis, whether in its full version employing ultrafilters due to Abraham Robinson, or in the recent \enquote{cheap version} avoiding ultrafilters due to Terrence Tao. 
%\keywords{Bernard Bolzano \and Paradoxes of the Infinite \and Measurable numbers \and Cantor's Set Theory \and Infinite quantities  \and Non-standard Analysis}

\end{abstract}

\section{Introduction}
\subsection{Existence of an actual infinity}

There are two major questions that anyone dealing with actual infinity has to address:
\begin{enumerate}
\item Is there any actual infinity?
\item If so, are there multiple infinities? How can we compare them? 
\end{enumerate}

An answer to the first question has been sought by many eminent thinkers, philosophers and mathematicians, starting from Aristotle, who formulated the question, to Cantor (\v{S}ebest\'{i}k 1992). Aristotle rejected actual infinity, and traditional scholasticism responded similarly in the famous thesis of \emph{Infinitum actu non datur.} It was not until the end of the 19th century that Georg Cantor introduced actual infinity in mathematics in the form of infinite sets. He gave a theological justification for their existence, placing them in the mind of God (Dauben 1979), (Halett 1986). A characteristic excerpt from Cantor's letter to Jeiler from 1895, quoted in (Halett 1986, p. 21), says \enquote{in particular, there are transfinite cardinal numbers and transfinite ordinal types which, just as much as the finite numbers and forms, possess a definite mathematical uniformity, discoverable by men. All these particular modes of the transfinite have existed from eternity as ideas in the Divine intellect.} Once Cantor's set theory was accepted and axiomatized, there has no longer been need to answer the questions above. The answer has ultimately been given in the form of an axiom: in the Zermelo-Fraenkel set theory, the axiom is called the \emph{Axiom of Infinity:}\footnote{For instance in (Fraenkel et al. 1973, p. 47).} $$(\exists z)((\emptyset \in z) \wedge (\forall x)(x \in z \Rightarrow x \cup \{x\} \in z)).$$

There is an important consequence of the Axiom of Infinity and of the \emph{Axiom of the Power-set}: \footnote{In symbols $(\forall a)(\exists y)(\forall x)(x \in y \Leftrightarrow x \subseteq a)$, (Fraenkel et al. 1973, p. 35). Of course the other axioms are important too, but these two are substantial for the construction of infinite ordinal and cardinal numbers.} the existence of infinitely many infinite ordinal and cardinal numbers.

\subsection{Measuring the actual infinity}

It is well-known that if we accept the existence of actual infinity and try to compare infinite collections, we have to choose one of the following two, mutually exclusive, principles:  
\begin{enumerate}[1.]
\item The \emph{part-whole principle,} also known as the 5th Euclid Axiom: The whole is greater than its part. 
\item The \emph{principle of one-to-one correspondence,} or Hume's Principle: Two sets have the same size if and only if there is a one-to-one correspondence between them. 

\end{enumerate}

The two principles cannot be valid simultaneously for infinite collections. This fact is called the \emph{Paradox of Reflexivity.} It was known already to Galileo Galilei who employed natural numbers and their squares to demonstrate it. Based on this paradox, Galileo claimed that the attributes of \enquote{equal}, \enquote{greater} and \enquote{smaller} could not be applied to infinite, but only to finite quantities. Similarly, Leibniz refused to compare infinite collections in size, although he did not reject actual infinity \emph{per se}.\footnote{Both Bolzano and Cantor quote the famous passage from Leibniz's letter to \enquote{I am so in favor of the actual infinite that instead of admitting that Nature abhors it, as is commonly said, I hold that Nature makes frequent use of it everywhere, in order to show more effectively the perfections of its Author} (Cantor 1976, p. 78),  (Russ 2004, p. 593.)}

Cantor based his set theory on the second principle. The one-to-one correspondence is decisive: from its existence, one can derive the \enquote{size} of sets -- Cantor calls it the \emph{cardinality.} The part-whole principle then fails dramatically, for all infinite sets. 

Bolzano built the foundations of his theory of infinity on the first principle. Here is a pertinent comment by Guillerrmina Waldegg, who investigated the approach of students to infinity and to comparison of infinite sets:
\begin{quote}
Bolzano's criterion, based on the part-whole correspondence is more \enquote{intuitive} because it is nearer to concrete (finite) experience (in fact it corresponds to the mainstream of the historical moment, and to common sense in all times). On the other hand, a one-to-one relationship is less visible. Cantor's solution could only be reached by means of a total detachment from meaning and intuition; his solution is therefore more problematic and difficult to understand, but it was well accepted once it proved its potential in the mathematical field. (Waldegg 2005, p. 574).
\end{quote}

Prior to Bolzano, there were several attempts to find alternative answers, by Grosseteste, Maignan, Rodrigo da Arriaga, and others, cf.\ the paper \emph{Measuring the Size of Infinite Collections of Natural Numbers: Was Cantor's Set Theory Inevitable?} (Mancosu 2009). Bolzano, however, went the farthest of all. The main part of Bolzano's theory of infinity is contained in his last treatise, the \emph{Paradoxes of the Infinite}, which he wrote in 1848, the year of his death. It was published only posthumously in 1851 (Bolzano 1851), (Russ 2004). We proceed primarily from this work, though we take Bolzano's other books into consideration as well. The paragraph numbers given below refer to the paragraphs in the \emph{Paradoxes of the Infinite}.

\subsection{Opinions on Bolzano's theory of infinity}

Bolzano's theory is usually perceived from the point of view of Cantor's set theory. The latter is viewed as the only correct theory. Bolzano's conception thus appears to be either incorrect, or an imperfect predecessor of Cantor's. 

Georg Cantor himself expressed this view, having read Bolzano's \emph{Paradoxes of the Infinite}. He mentioned them in his 1883 work \emph{Foundations of a General Theory of Manifolds}. While praising Bolzano on the one hand as an intrepid supporter of actual infinity, he nonetheless considered Bolzano's concept to be insufficient and erroneous.
\begin{quote}
The genuine-infinite as we encounter it, for 
example in the case of well-defined point sets, \dots has found its most decisive defender in a philosopher and mathematician of our century with a most acute mind, Bernhard Bolzano, who has developed his views relevant to the subject especially in the beautiful and substantial essay, \emph{Paradoxes of the Infinite.} \dots
Bolzano is perhaps the only one who assigns the genuine-infinite numbers their
rightful place. \dots However, the actual way in which he deals with
them, without being able to advance any kind of real definition of them, is something about which I am not at all in agreement with him, and I regard for example Sections 29-33 of that book as
\emph{unfounded and erroneous}. The author lacks both the general concept
of cardinality and the precise concept of number-of-elements for a real conceptual grasp of determinate infinite numbers. (Cantor 1976, p. 78).
\end{quote}

This view of Cantor's persists until today. For example, in his book \emph{Labyrinth of Thought} Jos\'{e} Ferreiros says of Bolzano
\begin{quote}
He proposed to base mathematics on notions similar to the set. He made a clear defense of actual infinity and he proposed precise notions for treating infinite sets. In this way he even came quite close to such a central notion of set theory as cardinality. But after having been close to the \emph{right} point of view, he departed from it in quite a \emph{strange} direction. (Ferreiros 1999, p. 75). 
\end{quote}
The \enquote{right point of view} is the idea that cardinality is the only meaningful way to compare abstract sets with respect to the multiplicity of their elements, as Ferreiros explains in a note. 

Experts on Bolzano are of a similar opinion. Paul Rusnock, who translated Bolzano's \emph{Theory of Science} into English, is very doubtful about Bolzano's conception\footnote{Rusnock argues using Bolzano's own definition of the set, according to which it does not matter how its elements are ordered. At the same time, however, if we want to declare two sets to be equal in terms of the size of their elements, it is also necessary that they have the same determining ground [Bestimmungsgrunde].  Rusnock considers this a mistake and rejects Bolzano's conception. See (Rusnock 2000). But Bolzano's definitions are not contradictory. Even Cantor's concept of sets could be rejected in a similar way. The existence of a one-to-one correspondence between two sets entails their ordering.} 

\begin{quote}
Bolzano did not see this point, and thus 
stopped on the boundary of Cantor's set theory of cardinal numbers. \dots Bolzano falls short of a full and satisfactory treatment of the theory of the infinite. (Rusnock 2000, pp .194, 196). 
\end{quote} 

Jan \v{S}ebest\'{i}k believes that Bolzano's interpretation is disputable as it wavers between two principles for comparing infinite sets. 

\begin{quote}
L'\'{e}vidence intuitive de l'axiom [du tout et de le part], d\'{e}riv\'{e}e de la consideration des ensembles finis, interdit \`a Bolzano, comme \`a ces pr\'{e}d\'{e}cesseurs, d'\'{e}tablir un crit\`ere d'\'equivalence par l'interm\'ediaire d'une bijection. Ce conflit profond, qui \'eclate presque dans chaque section des \emph{Paradoxes de l'infini}, l'emp\^eche de construire une arithm\'etique de l'infini qui soit coh\'erente. (\v{S}ebest\'{i}k 1992, p. 464). \end{quote}

Jan Berg, an expert on Bolzano and publisher of his works, found a passage in one of Bolzano's late letters to Robert Zimmermann which Berg interprets, in an effort to save Bolzano's reputation, as Bolzano changing his opinion and accepting the existence of one-to-one correspondence as a sufficient criterion for the equality of sets.
\begin{quote} Hence, it seems that in the end Bolzano confined the doctrine that the whole is greater than its part to the finite case and accepted isomorphism as a sufficient condition for the identity of cardinalities of infinite sets. (Mancosu 2009, p. 625). \end{quote}
Both Rusnock (Rusnock (2000), pp. 194 - 196.) and Mancosu examine this passage and consider Berg's interpretation to be groundless. Mancosu says, tongue-in-cheek  
\begin{quote}Thus, Bolzano saved his mathematical soul in extremis and joined the rank of the blessed Cantorians by repudiating his previous sins. ... I must observe that the literature of infinity is replete with such `Whig' history. (Mancosu 2009, p. 626). 
\end{quote} 

The situation is similar that of the historiography of infinitesimals. After the scorching critique of Berkeley and the triumph of the \enquote{great triumvirate} - Weierstrass, Cantor, Dedekind -  consequently after the introduction of the Archimedean continuum as the sole possible and analysis based on limits, they were mostly considered, at best, erroneous or, at worst, altogether inconsistent. Nevertheless the group of mathematicians around Mikhail Katz demonstrates in a number of papers that the view of infinitesimals through the lens of the Weierstrassian foundation is misguided. The title of the first paper is typical \emph{Is mathematical history written by the victors?}(Bair et al. 2013) The framework of Robinson's non-standard analysis is in many cases more helpful in understanding the work of Gregory, Fermat, Leibniz, Euler and Cauchy.  (Blaszczyk et al. 2017), (Bair et al. 2017), (Bascelli et al. 2017).

\section{Bolzano's theory of infinite multitudes}

Bolzano writes that he wants to contribute to the question of what is the infinite in general. He deals with infinite pluralities because 

\begin{quote}\dots if it should be proved that, strictly speaking, there is nothing other than pluralities to which the concept of infinity can be applied in its true meaning, i.e. if it should be proved that infinity is really only a property of a plurality or that everything which we have defined as \emph{infinite} is only called so because, and in so far as, we discover a property in it which can be regarded as an infinite plurality. Now it seems to me that is really the case. (\S 10.)
\end{quote} 

\subsection{Collections, multitudes, pluralities, series, quantities}

At the beginning of \emph{Paradoxes of the Infinite}, Bolzano very precisely and thoroughly explains the basic notions in \S\S 3 - 9. Nearly the same explanations can be found in the \emph{Theory of Science} in \S\S 82 - 86. The basic notion is 
\enquote{\emph{a collection} of certain things or a whole consisting of certain parts,} which are connected by the conjunction \enquote{and}.

Bolzano had dealt with the concept of \emph{collections} before, and there has been considerable professional discourse about his idea. (Simons 1998), (Rusnock 2000), (Behboud 1998), (Krickel 1995).  This primarily concerns the question of whether we should interpret the objects contained in a collection as elements or as parts thereof. Peter Simons shows that Bolzano's collections cannot be equated even with Cantor's sets, nor can they serve as a basis for mereology, the study of parts and wholes, as Franz Krickel has proposed, but that they constitute a separate and specific notion
\begin{quote} Bolzano's theory of collections is best interpreted as a distinct and distinctive theory of collections. (Simons 1998, p. 87).\end{quote} 

In his previous works, Bolzano tried to define the concept of the collection as generally as possible. Problems and ambiguities arise in particular around collections containing concrete parts, as Paul Rusnock points out. These are not substantive for our purposes, as we will be dealing only with certain abstract types of collections. We will proceed from Bolzano's quoted definition from the \emph{ Paradoxes of the Infinite.} What is important is that the parts contained in the collection are connected by a conjunction \enquote{and} and constitute a whole.

\emph{Collections [Inbegriff]} can be given as a certain list, for instance \enquote{the sun, the earth and the moon, or the rose and the concept of a rose}. (\S 3). Collections can be also denoted by some idea $A$, which we call the collection of all $A$. (\S 14). Parts of the collections are different. \footnote{This again poses a difficult question of how to interpret Bolzano's statements about variety. But let us stick to this simple interpretation.} The parts of the collections can, but need not be, arranged in some way.

\emph{A multitude [Menge]}\footnote{This is difficult to translate and there is no clear-cut solution. We adhere to the translation proposed and substantiated by Peter Simons and used by Steve Russ: \emph{Inbegriff = collection, Menge = multitude, Vielheit = plurality, Grosse = quantity.} We should point out that various English translations and articles use different terminology.} 
is a collection that is conceived such that the arrangement of its parts is unimportant. (\S 4). 
Bolzano uses the word \emph{Menge}.\footnote{Georg Cantor also uses the word in his articles from 1880 onwards instead of the original \emph{Mannichfaltigkeit} or \emph{Inbegriff}.} The English translation \emph{set} can be misleading, because it refers to the Cantor's conception of a set. The \enquote{set} is on a different \enquote{higher} level than its elements. \footnote{This is also reflected in the method of notation in which we use curly brackets. The symbol $\{a,b,c\}$ denotes a multitude containing elements $a,b, c$, whereas Bolzano would have written it as $a + b + c$. 
Likewise, if we want to indicate that a group of three items comprises a multitude, we draw a circle around it.} Bolzano's multitude, [Menge], is on the \enquote{same level} as the things it contains, because it is given by the usage of the connective \enquote{and}. Therefore, it is meaningful to speak about \emph{parts} of a collection. These parts, however, are not subsets. Subsets are also on a \enquote{higher} level than their elements.

Bolzano introduces multitudes for the same reason as Cantor introduces sets. 
In order to capture an infinite plurality altogether, it must be collected in a single collection, a single, currently infinite, whole.

A special type of a multitude is \emph{a plurality [Vielheit]}. Its parts are considered units of a certain kind $A$. (\S 4).

\emph{A series [Reihe]} is also a collection of things, but unlike a multitude, it is ordered. Bolzano denotes things which the series contains as $A, B, C, \dots, L, M, N \dots $. We can always determine the following term $N$ of the series from the preceding term $M$ by the special \emph{rule of formation} (\S 7).

If we have a series in which the first term is a unit of the $A$ kind and every following term is derived from the previous one as a sum of its predecessor and a new unity of the kind $A$, then all of the terms of this series are \emph{pluralities}, or, more straightforwardly, \emph{numbers.} (\S 8).
Some pluralities have so many terms, that they exhaust all the units and they have no last term. 

Now it is more evident why Bolzano defines a collection as a whole consisting of certain parts. A series is an example of such a whole. Its sum is a quantity of the same kind, it is a sum of all its parts.

A plurality is called \emph{infinite} if it is greater than every finite plurality, i.e. if every finite multitude represents only a part of it. (\S 9).

\emph{A finite multitude} is not explicitly defined here. We find the explanation in \S 22. The numbers of its parts are equal to some natural number $n$. 
\begin{quote} 
If we know that $A$ is finite, we designate some arbitrary thing in the multitude $A$ by 1, some other multitude by 2, etc. \dots, we must sometimes arrive at a thing in $A$ after which nothing remains which is still undesignated. Now let this last thing just spoken of in $A$ get the number $n$ for its designation, then the number of things in $A = n$. 
\end{quote}

Moreover, Bolzano speaks about \emph{quantities}. This too is one of Bolzano's complicated terms. (Simons 2004). Here we will stick to Bolzano's definition as given in \emph{Paradoxes of the Infinite}. A \emph{quantity, [Grosse]}, belongs to a kind of thing of which every two can have no other relation to one another than either being equal to one another, or one of them presenting itself as a sum that includes a part equal to the other one (\S 6). We see that pluralities are also quantities.

\emph{An infinite quantity} is greater than every finite number of the unit taken. An \emph{infinitely small quantity} is so small that every finite multiple of it is smaller than the unit. (\S 10).
\footnote{The seventh section of \emph{Pure Theory of Numbers} is devoted to Infinite Quantity Concepts. Bolzano distinguishes infinitely small, measurable, and infinitely large Infinite Quantity Concepts. More on this in the next section.}

A quantity $A$ is \emph{infinitely greater} than $B$, if $A$ is infinitely greater than every finite number of $B$; we will denote that here as $A >> B$.\footnote{Bolzano doesn't define this notion explicitly, but in \S 10 he speaks about \enquote{infinitely smaller and infinitely greater \emph{quantities of higher order}, which all proceed from the same concept}. In \S 33 he proves that $S$ is infinitely greater than $P$ because $S - n \cdot P > 0$ for any finite number $n$, see Theorem \ref{S>P}.} 

Bolzano defended the existence of actual infinity decades before Cantor. Cantor underpinned his belief in the existence of infinite cardinal and ordinal numbers with theology.
(Dauben 1979, pp. 228 - 232.) Bolzano also proves the existence of \emph{the infinite multitude of truths in themselves}.\footnote{This proof has two variations of which one is included in the \emph{Theory of Science} (Bolzano 1837/2014) and the other in the \emph{Paradoxes of the Infinite} in \S 13.} He proceeds from a proposition which can be proved by contradiction: \enquote{There are truths.} Bolzano asserts that if we have any true proposition $A$, then the proposition \enquote{A is true} is also true and it is different. We can construct infinitely many truths by this way, in fact by induction. Bolzano finally uses the theological argument
\begin{quote}
Thus we must attribute to Him [God] a power of knowledge that is true omniscience, that therefore comprehends an infinite multitude of truths. (\S 11.)
\end{quote}
Nevertheless, Bolzano tries to get by without a theological argument. It could be made possible by the special mode of the existence of truths and propositions in themselves. (\v{S}ebest\'{i}k 1992, pp. 446-447).

Bolzano then refers to the similarity of the multitude of these propositions and the multitude of natural numbers and infers that this multitude too must be infinite. He notes that there is no last number among natural numbers, nor can there be, as the very notion conceals a contradiction. (\S 15). Similarly, a multitude of all fractions (of rational numbers) and irrational numbers is also infinite. The multitude of all quantities including infinitely small and infinitely great quantities must be infinite too. (\S 16).

In all Bolzano's demonstrations of infinite multitudes, the emphasis is not on their \emph{existence} but on their \emph{infiniteness.} The existence of collections (multitudes) follows from their definition. But it's necessary to prove they are infinite.

\subsection{Part-whole principle and one-to-one correspondence}\label{correspondence}

Bolzano claims unequivocally to the \emph{part-whole principle.} Not all infinite multitudes are to be regarded as equal to one another with respect to their plurality. If one multitude is contained in another as a mere part of it then the former is smaller than the latter.  

The example, \enquote{to whom must it not be clear,} is  the straight line with two points $a$ and $b$. The length of the straight line from the point $a$  continuing without limit in the direction to the point $b$ is infinite. It may be called greater than the straight line from the point $b$ continuing without limit in the same direction, by the piece $ab$. And the straight line continuing without limit on both sides may be called greater by a quantity which is itself infinite. (\S 19).

Simultaneously Bolzano is also aware of the existence of the \emph{one-to-one correspondence} between some infinite multitudes. He is aware that sometimes one multitude is a part of the other one simultaneously. The following quotation begins a little ambiguously. 
\begin{quote}Let us now turn to the consideration of a highly remarkable peculiarity which \emph{can occur}, indeed actually \emph{always occurs}, in the relationship between two multitudes \emph{if they are both infinite}, but which previously has been overlooked to the detriment of knowledge \dots (\S 20.)\end{quote} 
Bolzano describes very carefully the one-to-one correspondence.\footnote{\enquote{It is possible to combine each thing belonging to one multitude with a thing of the other multitude, thus creating pairs, with the result being that no single thing remains without connection to a pair, and no single thing appears in two or more pairs (\S 20.)}} He presents two examples of two multitudes such that, \emph{on the one hand}, there is a one-to-one correspondence between them and, \emph{on the other hand}, one of these multitudes contains the other within itself as a mere part (\S 20). 

The first example are the multitudes of quantities\footnote{\label{IQC} When Bolzano writes about quantities in the interval $(0, 5)$ he apparently has on mind his \emph{measurable numbers}. Their construction is described in his \emph{Infinite Quantity Concepts}, see the Section \ref{measurable}.} 
in two open intervals $(0,5)$ and $(0,12)$. \enquote{Certainly the latter multitude is greater since the former is indisputably only a part of it.} 

The one-to-one correspondence is defined by the equation 
$$5y = 12x.$$

The second example is similar: the infinite multitudes of points of line segments $ab$ and $ac$, where $b$ is an inner point of line $ac$. \enquote{The multitude of points which lie in $ac$ exceeds that of the points in $ab$, because in $ac$ as well as all the points of $ab$ there also lie that of $bc$ which do not occur in $ab$.} 

Let $x$ be some point in $ab$ then it corresponds to the point $y$ in $ac$ if the following proportion holds 
$$ab : ac = ax : ay.$$

Bolzano emphatically warns against the assertion that one-to-one correspondence of two multitudes allows for the conclusion of the equal plurality of their parts if they are infinite. 
\begin{quote} An equality of these multiplicities can only be concluded if some other reason is added, such as that both multitudes have exactly the same \emph{determining ground [Bestimmungsr\H{u}nde]}, e.g. they have exactly the same \emph{way of being formed [Entstehungsweise]}. (\S 21). \end{quote} 

Bolzano doesn't explain here what the \emph{determining ground} means exactly. He speaks in other texts about \emph{elements of determination, [bestimmende St\H{u}cke]}, though it is not entirely clear there either. To determine an object means to describe all representations that the objects falls under. The determination is complete if the representation of an object is unique. Thus, a point, the area and the diameter completely determine a circle, because all of its properties can be uniquely determined. (\v{S}ebest\'{i}k 1992, p. 460).   

Bolzano shows that this is not the case in the examples mentioned. (\S 23). 
In the first Example, the distance of two quantities in $(0,5)$, say $3$ and $4$, is less than the distance of their corresponding quantities in $(0,12)$ which are $7\frac{1}{5}$ and $9\frac{3}{5}$. It is similar in the second example. The distance $ax$ is different (smaller) than the distance $ay.$ Obviously, Bolzano considers the distance to be a determining element. In this case we can speak about isometry rather than a mere one-to-one correspondence. An isometry is a distance-preserving transformation between two metric spaces.

On the other side, Bolzano presents the positive example of multitudes which have the same plurality of elements. (\S 29).
He denotes the multitude of quantities between two numbers $a$ and $b$ as $Mult(b-a)$. This multitude is infinite and depends only on the distance of the boundary quantities and therefore must be equal whenever this distance is equal. Therefore, we have innumerable equations of the forms 
$$Mult(8 - 7) = Mult(13-12). $$
$$Mult(b - a) : Mult (d - c) = (b-a) : (d-c)$$

One-to-one correspondence is a necessary condition but not a sufficient one for the equality of two multitudes. The necessary condition in the cited examples is the \emph{isometry}. In the case of geometrical objects, Bolzano says:

\begin{quote}
Every spatial extension that is not only similar to another but also geometrically equal (i.e. coincides with it in all characteristics that are conceptually representable through comparison with a given distance) must also have an equal multitude of points. (\S 49. 2.) 
\end{quote}

At this point, Bolzano fundamentally breaks with Cantor. For Bolzano it is still true that \emph{the whole is greater than its part} and the existence of the \emph{one-to-one correspondence} does not attest to the equality of multitudes. We will see how Bolzano deals with this further on. 

\subsection{Calculation of the infinite}

The sums of series represent infinite quantities. Is it possible to determine their relationship? Is it possible to compare them or to count them? Bolzano answers that 
\begin{quote}a calculation of the infinite done correctly doesn't aim at calculation of that which is determinable through no number, namely not a calculation of infinite pluralities in itself, but only a determination of the relationship of one infinity to another which is feasible, in certain cases, as we shall show in several examples. (\S 28). 
\end{quote}

In fact, our knowledge about Bolzano's calculation of the infinite comes from examples. Nevertheless, they are clear enough that we can derive the rules. Bolzano makes an important assumption that all series have \emph{one and the same multitude of terms}. (\S 33). He does not ask what the plurality of the terms is, but assumes it is \emph{always the same.} The sum of the series represents a whole, a uniquely given quantity. If we add positive terms, then the quantity increases; if we take away positive terms, the quantity decreases.

It is similar when we consider the sum of the series
$$1/2 + 1/4 + 1/8 + \dots + \text{in inf.}$$ 
We are not interested in the plurality of the terms, but we are interested in its sum, which is equal to $1$.\footnote{It's equal in the meaning of equality introduced by Bolzano in the \emph{Infinite Quantity Concepts}, i.e. if their difference is infinitely small, see \ref{measurable}.} 

\begin{example} \label{rady}Bolzano introduces several examples of infinite series (\S 29, \S 33).
\begin{enumerate}
\item $N = 1 + 1 + 1 + 1 + \dots + \text{in inf.} $
\item $P = 1 + 2 + 3 + 4 + \dots + \text{in inf.} $
\item $S = 1 + 4 + 9 + 16 + \dots + \text{in inf.} $
\item $N_n = \dots 1 + 1 + 1 + \dots + \text{in inf.} $

The last series is similar to the first one, but we only add from the $(n+1)$-st term. The first $n$ terms are omitted. We designate this as $N_n$.
\end{enumerate}
\end{example}

Bolzano shows several examples and explains how to count the infinite series. 

\begin{theorem}\label{n}
If we designate the number of natural numbers from the $n+1$ by $N_n$ then we obtain the equation. 
$$n = N - N_n.$$
\end{theorem}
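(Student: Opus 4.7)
The plan is to split the series $N$ at its $n$-th term, identify the tail with $N_n$, and read off $N - N_n$ from the head.

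First I would write $N = 1+1+1+\cdots+\text{in inf.}$ and index its $k$-th term by the natural number $k$, so that the whole series is literally \enquote{one unit for each natural number}. Partitioning the natural numbers into $\{1,2,\dots,n\}$ and $\{n{+}1,n{+}2,\dots\}$ partitions the terms of $N$ into two groups: a first group of $n$ units, and a second group of units indexed by the natural numbers from $n{+}1$ onwards. By Bolzano's own definition this second group is exactly the series $N_n$ introduced in Example \ref{rady}. Because Bolzano treats a series as \emph{a whole consisting of its parts}, joined by the conjunction \enquote{and}, the quantity $N$ is the sum of the quantities associated with these two parts:
\[
N \;=\; \underbrace{(1+1+\cdots+1)}_{n\text{ terms}} \;+\; N_n.
\]

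Next I would evaluate the first summand. By Bolzano's definition of a finite multitude (\S 22), a plurality obtained by designating units $1,2,\dots,n$ and stopping at $n$ is equal to the natural number $n$; hence $(1+1+\cdots+1)$ with $n$ terms equals $n$. Substituting gives $N = n + N_n$, which rearranges to the desired equation $n = N - N_n$.

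The only point requiring care is the legitimacy of the subtraction, since $N$ and $N_n$ are both infinite. Here I would appeal to Bolzano's standing assumption in \S 33 that the series under consideration all have \emph{one and the same multitude of terms}: the terms of $N_n$ are in evident correspondence (a mere shift of index by $n$) with all but the first $n$ terms of $N$, so $N_n$ is a determinate \emph{part} of $N$, and by the part--whole principle the difference $N - N_n$ is a well-defined quantity equal to the sum of the terms removed. Those removed terms are precisely the first $n$ ones, whose sum is $n$. I expect this justification of subtraction---that the excess of the whole over a properly identified part is exactly the quantity of the complement---to be the main (and only) conceptual obstacle; once Bolzano's framework of series-as-wholes and the part--whole principle is granted, the computation itself is immediate.
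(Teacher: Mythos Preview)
Your reconstruction is correct and captures precisely the rationale the paper attributes to Bolzano. Note, however, that the paper does not actually supply a proof at this point: it reports that Bolzano himself offers none, calling the equation \enquote{certain and quite unobjectionable}, and merely identifies the implicit assumption---that all series have one and the same multitude of terms---as the basis of his conviction, which is exactly the principle you invoke to justify the subtraction. A formal verification appears only later, in the sequence interpretation (Theorem~\ref{pomer}+1), where $n + N_n = N$ is checked via $(n,n,\dots,n,n,\dots) + (0,0,\dots,1,2,3,\dots) =_{\mathcal F} (1,2,3,\dots)$; your argument is the informal Bolzanian counterpart of that computation.
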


Bolzano doesn't prove this assertion, he calls it the \enquote{certain and quite unobjectionable equation from which we see how two infinite quantities $N$ and $N_n$ have a completely definite finite difference} (\S 29). His persuasion is based on the implicit assumption that the multitude of the terms of the series is always one and the same. Consequently, every term of the series is important. From the proof of the following theorem we see again the same principle. 

\begin{theorem}\label{e}
If $0 < e < 1$ then 
$$1 + e + e^2 + e^3 + \dots + \text{in inf.} = \frac{1}{1-e}$$
\end{theorem}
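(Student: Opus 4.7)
The plan is to mimic the classical algebraic derivation of the geometric series sum, but to reinterpret it in Bolzano's framework: every infinite series has one and the same (infinite) multitude of terms $N$, so operations like termwise subtraction and a shift of index remain meaningful and produce a well-defined finite remainder. Throughout, equality is to be understood in the sense of the measurable numbers from \S\ref{measurable}, namely that two quantities are equal when their difference is infinitely small.

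First, I would set $S = 1 + e + e^2 + e^3 + \dots + \text{in inf.}$, regarding this as a single quantity determined by the series with its fixed multitude $N$ of terms. Multiplying each term by $e$ gives
$$eS = e + e^2 + e^3 + \dots + \text{in inf.},$$
which is the \emph{same} collection of positions, only with every entry shifted one place — so it has one ``last'' term $e^N$ at the end and is missing the initial $1$. Termwise subtraction, justified exactly as in Theorem \ref{n} by the implicit assumption that both series carry the same multitude of terms, yields the finite difference
$$S - eS = 1 - e^N,$$
hence
$$S \;=\; \frac{1}{1-e} \;-\; \frac{e^N}{1-e}.$$

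The heart of the argument is then to show that $e^N/(1-e)$ is infinitely small, so that $S$ and $1/(1-e)$ differ by an infinitely small quantity and are therefore equal in Bolzano's sense. Since $1-e$ is a fixed finite positive quantity, it suffices to prove that $e^N$ itself is infinitely small, i.e.\ that for every finite natural number $n$ one has $n \cdot e^N < 1$, in accordance with Bolzano's definition in \S 10. This should follow from the fact that for $0 < e < 1$ the finite powers $e^m$ become smaller than any prescribed finite positive bound once $m$ is large enough, combined with the fact that $N$ is infinitely greater than every finite $m$, so $e^N$ is in turn smaller than every $e^m$ with $m$ finite.

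The main obstacle is precisely this last step: rigorously justifying, inside Bolzano's framework (rather than by a modern $\varepsilon$-$M$ argument), that $e^N$ is infinitely small. One would need to invoke the construction of measurable numbers from \S\ref{measurable} and the constancy of the multitude $N$ across all infinite series, arguing that if $e^m$ can be made less than $1/n$ for every finite $n$ by choosing $m$ finite but large, then the ``infinite-th'' power $e^N$ is dominated by each such $e^m$ and hence satisfies $n\cdot e^N < 1$ for every finite $n$. Once this is granted, the division by the finite quantity $1-e$ preserves being infinitely small, and the claimed equality follows.
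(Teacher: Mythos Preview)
Your proposal reaches the correct conclusion but takes a route that is both different from, and less faithful to, the paper's (i.e.\ Bolzano's) own argument. Bolzano never introduces an ``infinite-th power'' $e^N$ nor a ``last term'' of the series; indeed he insists (\S 15) that an infinite series has no last term. Instead, the paper's proof works throughout with a \emph{finite} truncation index $n$: one writes $S = \frac{1-e^n}{1-e} + e^n[1 + e + \cdots + \text{in inf.}]$, and the crucial Bolzanian point---quoted verbatim in the paper---is that the bracketed tail is \emph{not} $S$, since it has $n$ fewer terms, so it equals $S - P^2$ for some positive $P^2$. Solving for $S$ gives $S = \frac{1}{1-e} - \frac{e^n}{1-e^n}P^2$, and the final step is a genuine $\varepsilon$-style argument: comparing the two expressions for $S$ yields $\frac{e^n}{1-e^n}P^2 + P^1 = \frac{e^n}{1-e}$, and since the right-hand side can be made smaller than any $1/N$ by taking $n$ large, so can each positive summand on the left.

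Your version compresses all of this into the single symbol $e^N$, which buys brevity but at the cost of the very step you flag as the obstacle: inside Bolzano's framework there is no direct mechanism that gives $e^N$ a meaning, let alone shows it infinitely small, without essentially re-running the finite-$n$ argument anyway. Your shift-and-subtract manoeuvre ($S - eS = 1 - e^N$) also conflates ``multiply each term by $e$'' with ``shift by one position,'' which is exactly the identification Bolzano refuses to make. In short, your sketch is closer to the later non-standard interpretation of \S 3 than to Bolzano's own proof; the paper's argument is more roundabout but stays entirely within finite indices and the part-whole bookkeeping that Bolzano actually licenses.
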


\begin{proof} The proof begins like this
\begin{itemize}
\item[] $S = 1 + e + e^{n-1} + e^n + e^{n+1} + \dots + \text{in inf.}$
\item[] $S = \ \quad \frac{1 - e^n}{1 - e} \quad + \quad e^n + e^{n+1} + \dots + \text{in inf.} \quad = \quad \frac{1 - e^n}{1 - e} + P^1 $, $P_1$ designates a positive quantity which is dependent on $n$ an $e$.
\item[] $S = \ \quad \frac{1 - e^n}{1 - e} \quad + \quad e^n [1 + e + \dots + \text{in inf.}]$ 
\end{itemize}
\begin{quote}
The quantity $[1 + e + \dots + \text{in inf.}]$ is not to be regarded as identical to $S$, since the set of terms is not the same, rather here it is indisputably $n$ terms less than in $S$. 
\footnote{The proof of this theorem is stated in the note in \S 18. Here is the end of Bolzano's proof
\begin{itemize} 
\item[] We designate $[1 + e + \cdots + \text{in inf.} ] = S - P^2$, in which we assume that $P^2$ designates a quantity which is positive and dependent on $n$. We obtain
\item[]$S = \frac{1 - e^n}{1 - e} + e^n(S - P^2) = \frac{1 - e^n}{1 - e} + e^n S - e^n P^2 $
\item[]$S (1 - e^n) = \frac{1 - e^n}{1 - e} - e^n P^2$
\item[]$S = \frac{1}{1 - e} - \frac{e^n}{1 - e^n} P^2.$ Finally we have to prove that $\frac{e^n}{1 - e^n} P^2$ can be brought down below any $1/N$.

\item[] $S = \frac{1 - e^n}{1 - e} + P^1 = \frac{1}{1 - e} - \frac{e^n}{1 - e^n} P^2$, thus $ \frac{e^n}{1 - e^n} P^2 + P^1 = \frac{e^n}{1 - e}.$ 
\item[] If we take $n$ arbitrarily great, thereby the value $\frac{e^n}{1 - e}$ is brought down below under every arbitrary quantity $1/N$, and the quantities $ \frac{e^n}{1 - e^n} P^2$ and $P^1$ must themselves fall below any arbitrary value. Consequently 
\item[] $S = \frac{1}{1 - e}.$ 
\end{itemize}}
\end{quote}
\end{proof} 
The following theorem and its proof demonstrates Bolzano's way of adding infinite series and the sufficient condition of their order. He employs Galileo's example of the series of natural numbers and the series of their squares (second powers), so $P$ and $S$ from the example above. The question isn't which series has more terms - the multitude of terms is the same - but which sum of the series is greater. 
$$P = 1 + 2 + 3 + 4 + \dots + \text{in inf.} $$
$$S = 1 + 4 + 9 + 16 + \dots + \text{in inf.} $$

\begin{theorem}\label{S>P} Let $P, S$ be the sums of infinite series from the example \ref{rady}. Then
\begin{itemize}
\item $S$ is greater than $P$, $S > P$. 
\item $S$ is infinitely greater than $P$, $S >> P$. (\S 33).
\end{itemize} 
\end{theorem}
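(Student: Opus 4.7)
The plan is to use Bolzano's foundational assumption that both series $P$ and $S$ have \emph{one and the same multitude of terms}. Once the terms of the two series are matched index-by-index, both claims reduce to comparing the aggregate effect of subtracting one whole from another term-by-term, just as Bolzano did in his calculations involving $N$ and $N_n$ in Theorem \ref{n}.

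For the first claim $S > P$, I would form the difference term by term:
\[
S - P \;=\; (1-1) + (4-2) + (9-3) + (16-4) + \dots \;=\; 0 + 2 + 6 + 12 + \dots \text{ in inf.}
\]
The $k$-th term of this series is $k^2 - k = k(k-1)$, which is zero for $k=1$ and strictly positive for every $k \geq 2$. Hence $S - P$ is a collection of non-negative quantities containing infinitely many positive summands, so $S - P > 0$, and in fact $S - P$ is itself infinite. This is precisely the kind of term-by-term cancellation Bolzano licenses by insisting that the multitude of terms is fixed: every positive summand of $S$ is paired with the corresponding (smaller) summand of $P$, and none is lost.

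For the second claim $S \gg P$, by the footnote following the definition of $\gg$ it suffices to show that $S - n \cdot P > 0$ for every finite natural number $n$. Writing the difference term by term gives
\[
S - n\cdot P \;=\; \sum_{k} \bigl(k^2 - n k\bigr) \;=\; \sum_{k} k(k-n).
\]
The first $n$ summands $k(k-n)$ for $k=1,\dots,n$ are finite and non-positive, contributing a fixed finite quantity, say $-C_n$ with $C_n = \sum_{k=1}^{n} k(n-k)$ a finite number. The remaining terms, for $k \geq n+1$, satisfy $k(k-n) \geq k - n \geq 1$, so their sum is at least $1 + 2 + 3 + \dots \text{ in inf.} = P$, which is infinite. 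Since the infinite positive tail exceeds every finite $C_n$, we conclude $S - n\cdot P > 0$ for every finite $n$, which is exactly the definition of $S \gg P$.

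The main obstacle is conceptual rather than computational: one has to be careful to use only manipulations already legitimized by Bolzano's framework. In particular, splitting the infinite series $S - n\cdot P$ into a finite initial block plus an infinite tail, and then comparing the tail with an auxiliary infinite series $P$, implicitly uses the principle that the multitude of terms is preserved under rearrangement of the indexing and that a finite quantity subtracted from an infinite one remains infinite. These are precisely the principles Bolzano relies on in Theorem \ref{n} and Theorem \ref{e}, so the argument stays within his calculus of the infinite.
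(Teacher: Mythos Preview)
Your proof is correct and follows essentially the same route as the paper (which reproduces Bolzano's own argument from \S 33): form $S - mP$ term by term, observe that only the first $m-1$ terms are negative and the $m$th is zero while all later terms are positive and grow without bound, and conclude the sum is positive for every finite $m$. Your additional step of bounding the tail below by an infinite series is a reasonable elaboration of Bolzano's phrase ``grow indefinitely,'' though strictly speaking that shifted tail is not $P$ itself in Bolzano's framework (just as $N_n \neq N$); this does not affect the argument, since the shifted series is still infinite.
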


\begin{proof} Bolzano's reasoning is: \enquote{The multitude of terms in both series is certainly the same. By raising every single term of the series $P$ to the square into the series $P$, we alter merely the nature (magnitude) of these terms, not their plurality.} 
\begin{itemize}
\item Every term of $S$, except the first one, is greater than the corresponding term of $P$, consequently $$S > P.$$ 
\item We shall prove that $S$ is infinitely greater than $P$. If we subtract successively the series $P$ from $S$ then we obtain the remainders

$S - P = 0 + 2 + 6 + 12 + \dots + \text{in inf.} $

$S - 2 P = -1 + 0 + 3 + 8 + \dots + \text{in inf.} $

$S - m P = (1-m) + \dots + (n^2-mn) + \dots + \text{in inf.} $

In these series, only a finite multitude of terms, namely $m-1$, is negative, and the $m$th term is 0, but all successive terms are positive and grow indefinitely. The sum of every series is positive. Thus $$S >> P.$$ 
\end{itemize}
\end{proof}
Consequently, there is infinitely many infinite quantities. Sometimes we can determine their sum, difference, or order. 

\subsection{More infinite quantities}

Some infinite sequences represent finite quantities, although they are composed of an infinite multitude of fractions, for instance irrational quantities or geometrical series.

\begin{example} Let $e < 1$. 
\begin{enumerate}
\item 
$$ \frac{14}{10} + \frac{1}{100} + \frac{4}{1000} + \frac{2}{10000} + \dots + \text{in inf.} = \sqrt 2$$
\item $$ a + ae + ae^2 + ae^3 + \dots + \text{in inf.} = \frac{a}{1-e}$$
\end{enumerate}
\end{example}

Bolzano presents examples of infinite quantities of higher orders and infinite quantities which have any rational or irrational order. So he extends the notion of the infinite quantities. 
\begin{example}

Let $\alpha$ be any quantity then the following series are also quantities.  $\alpha \cdot N$ is also a quantity. (\S 29).
\begin{enumerate}
\item $$\alpha \cdot N = \alpha + \alpha + \alpha + \dots + \text{in inf.}$$ 
\item $$N^2 = N + N + N + N + \dots + \text{in inf.} $$
\item $$N^3 = N^2 + N^2 + N^2 + N^2 + \dots + \text{in inf.} $$

\end{enumerate}
\end{example}

\begin{theorem}\label{pomer}
\begin{enumerate}
\item Let $\alpha, \beta$ be two quantities then $$(\alpha \cdot N) : (\beta \cdot N) = \alpha : \beta.$$ 
\item $N^2$ is infinitely greater than $N$, $$N^2 >> N.$$
\item $N^3$ is infinitely greater than $N^2$, $$N^3 >> N^2.$$
\end{enumerate}
\end{theorem}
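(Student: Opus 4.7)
The strategy mirrors Bolzano's treatment of Theorem \ref{S>P}: exploit the implicit assumption that each of the series in question has \emph{one and the same multitude of terms}, so that termwise subtraction and ratio yield well-defined infinite quantities; recall that $A >> B$ means $A - n \cdot B > 0$ for every finite natural $n$. For part 1, I would write both series out in full,
\[
\alpha \cdot N = \alpha + \alpha + \alpha + \cdots + \text{in inf.}, \qquad \beta \cdot N = \beta + \beta + \beta + \cdots + \text{in inf.},
\]
with the same multitude of terms. Since each term of $\alpha \cdot N$ stands to the corresponding term of $\beta \cdot N$ in the ratio $\alpha : \beta$, the total sums stand in the same ratio, giving $(\alpha \cdot N) : (\beta \cdot N) = \alpha : \beta$.

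For part 2, following Bolzano's scheme for $S >> P$, I would expand $m \cdot N = m + m + m + \cdots + \text{in inf.}$ (by part 1 with $\alpha = m$, $\beta = 1$) and subtract termwise from $N^2 = N + N + N + \cdots + \text{in inf.}$, obtaining
\[
N^2 - m \cdot N = (N - m) + (N - m) + (N - m) + \cdots + \text{in inf.}
\]
Since $N$ is infinite, every summand $N - m$ is positive (indeed itself infinite) for every finite $m$; an infinite sum of positive quantities is positive, so $N^2 - m \cdot N > 0$, hence $N^2 >> N$. Part 3 is the same idea one level up: by distributivity $m \cdot N^2 = m \cdot N + m \cdot N + m \cdot N + \cdots + \text{in inf.}$, so subtracting termwise from $N^3 = N^2 + N^2 + N^2 + \cdots + \text{in inf.}$ yields
\[
N^3 - m \cdot N^2 = (N^2 - m \cdot N) + (N^2 - m \cdot N) + (N^2 - m \cdot N) + \cdots + \text{in inf.},
\]
and each summand is positive by part 2, so $N^3 >> N^2$.

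The main obstacle is conceptual rather than computational: every termwise manipulation above presupposes Bolzano's premise that $N$, $\alpha \cdot N$, $\beta \cdot N$, $m \cdot N$, $N^2$, $m \cdot N^2$ and $N^3$ all carry the very same infinite multitude of terms, and that distributivity and termwise subtraction respect this alignment. Once that premise is granted, each step is as transparent as in Theorem \ref{S>P}; without it, the subtractions have no well-defined meaning. I would therefore state the premise explicitly at the outset and invoke it at each subtraction.
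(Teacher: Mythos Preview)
Your argument is sound within Bolzano's own heuristic framework, but it takes a genuinely different route from the paper. The paper does not supply a Bolzano-style proof of this theorem at all; instead, its proof appears later (Theorem~5), where the series are interpreted as sequences of partial sums modulo the Fr\'{e}chet filter: $N \sim (n)$, $N^2 \sim (n^2)$, $N^3 \sim (n^3)$, $r\cdot N \sim (rn)$. There one simply checks the pointwise facts $(rn):(sn) = r:s$, and $(\forall k)(\forall n)(n>k \Rightarrow n^2 > k\cdot n)$, and similarly $n^3 > k\cdot n^2$ for $n>k$. So the paper's proof is an elementary verification about real-valued sequences, not a termwise manipulation of infinite series with infinite summands.

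Your approach has the virtue of staying entirely inside Bolzano's informal calculus of series, in the spirit of the proof of Theorem~\ref{S>P}: you treat $N^2$ as literally $N+N+N+\cdots$ and subtract $m\cdot N = m+m+m+\cdots$ termwise. This is historically faithful and makes the result feel inevitable once the ``same multitude of terms'' premise is accepted. The cost, which you correctly flag, is that each summand $N-m$ is itself an infinite quantity, so the argument leans more heavily on Bolzano's unformalized premise than the $S>>P$ proof does (where every summand was a finite number). The paper's sequence interpretation trades that conceptual burden for a purely arithmetic check, at the price of first having to set up and justify the interpretation $N^2 \sim (n^2)$ rather than the more literal reading of $N^2$ as a series of $N$'s.
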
 

In \emph{ Paradoxes of the Infinite} Bolzano investigates in particular series that have infinite sums. He investigated infinitely small quantities earlier in \emph{Infinite Quantity Concepts}; see the following section. Nevertheless, he mentions them here as well. 

\begin{example}
If $N$ is any infinitely great quantity, then the infinitely small quantity is represented by 
$$1/N.$$
\end{example}
Obviously, Bolzano tries to create a general theory of quantities.
\begin{quote}
We have infinitely many infinitely great and infinitely small quantities, of which they have every arbitrary ratio one to another. In particular, they can be infinitely greater or infinitely smaller. Therefore, there also exist infinite orders among infinitely large and infinitely small quantities, and it is indeed possible to find frequently correct equations between quantities of this kind. (\S 30)
\end{quote}

\subsection{Conditions for infinite series}\label{conditions} 
Not all series can be considered quantities. The first condition was presented in the definition of the series. A \emph{rule of formation} determines the following term from the previous one.

The further condition is presented in $\S 32$. Bolzano says
\begin{quote}
I may draw attention to the fact, which is not itself incomprehensible, that there may be \emph{quantity expressions} which designate \emph{no actual quantity}.\dots In particular, a \emph{series}, if we want to consider it only as a quantity \dots has such a nature that it undergoes no change in value when we make a change in the order of its terms. With quantities, it must be that 
$$(A + B) + C = A + (B + C) = (A + C) + B.$$ 
\end{quote}
To put it in contemporary terms, the sum must be associative and commutative.

\begin{example}
Bolzano's counterexample is a series
$$a - a + a - a + a - a + \dots \text{in inf.}. $$ 
If we change parentheses or the arrangement of the terms we obtain different quantities. 
\begin{itemize}
\item[] $\ \ 0 = (a - a) + (a - a) + (a - a) + \dots \text{in inf.}$
\item[] $\ \ a = a + (- a + a) + (- a + a) + \dots \text {in inf.}$
\item[] $-a = -a + (a - a) + (a - a) + (a - a) + \dots \text{in inf.}$
\end{itemize}
Another counterexample is $$1 - 2 + 4 - 8 + \dots \text{in inf.}$$ 
\end{example}

The meaning of the criterion of the associativity and commutativity of series is not entirely clear. The question is, for instance, whether the following sequence would be considered a quantity or not. 
$$1 + 2 + 1 + 2 + 1 + 2 + \dots \text{in inf.}$$ 
Probably not, because then the series $2 + 1 + 2 + 1 + 2 + 1 + \dots \text{in inf.}$ would also be a quantity, and their difference would be the problematic series $1 - 1 + 1 - 1 + 1 - 1 + \dots \text{in inf.}$ So what does the requirement of associativity and commutativity mean? Bolzano's aim was to create a general theory of quantities. He needed sums, differences, and maybe also products and ratios of quantities to be quantities as well.

\subsection{Measurable numbers}\label{measurable}

Before we present an interpretation of this fragment of Bolzano's theory of infinity, we should recall Bolzano's theory of measurable numbers. It is found in the manuscript \emph{Infinite Quantity Concepts}, the Seventh Section of \emph{Pure Theory of Numbers} from the early 1830's. It was not published until 1962, when it was discovered by Karel Rychl\'{i}k and prepared for print. There was a great deal of dispute over it; perhaps we can conclude that it can be considered a theory of real numbers \emph{cum grano salis}.\footnote{Russ, S., Trlifajov\'{a}, K. 2016.}

The construction is similar to the construction of infinite pluralities, but Bolzano employs not only infinite sums but infinitely many arithmetic operations used on integers. There is also an implicit assumption that the infinite multitude of operations is always the same. The basic notion is an infinite quantity expression, which is a generalization of the notion of rational numbers. While rational numbers can be considered \emph{quantity expressions} in which only a finite number of arithmetic operations (addition, subtraction, multiplication and division) on integers is required, in \emph{infinite quantity expressions} an infinite multitude of arithmetic operations is required. 
\begin{example} Bolzano introduces several examples.
\begin{enumerate}
\item $1 + 2 + 3 + 4 + \dots \text{in inf.}$
\item ${1 \over 2} - {1 \over 4} + {1 \over 8} - {1 \over 16} + \dots \text{in inf.}$
\item $(1 - {1 \over 2})(1 - {1 \over 4})(1 - {1 \over 8})(1 - {1 \over 16}) \dots \text{in inf.}$
\item $a + \frac{b}{1+1+1+ \dots \text{in inf.}}$ where $a, b$ is a pair of whole numbers. 
\end{enumerate}
\end{example}

Bolzano designates a \emph{measurable number} as an infinite quantity expression $S$ for which the following condition holds true. If for every positive integer $q$ we determine the integer $p$ such that 
$$ S = \frac{p}{q} + P_1 = \frac{p+1}{q} - P_2.\footnote{This is the main defect of Bolzano's theory of measurable numbers. Everything would be entirely consistent if this condition was slightly different.
$$S = \frac{p-1}{q} + P_1 = \frac{p+1}{q} - P_2.$$ There are several possible explanations but that is not the subject of this paper.} $$
where $P_1$ and $P_2$ denote a pair of strictly positive quantity expressions (the former possibly being zero).

Infinite number expression is \emph{infinitely small} if its absolute value is less than $1 \over q$ for any natural number $q$, and it is \emph{infinitely great} if it is greater than any $q$.

Bolzano designates that two measurable numbers $S, P$ are \emph{equal} or \emph{equivalent} , if \emph{their difference is infinitely small}.\footnote{It is a similar relation as the Leibniz equality \emph{up to} denoted by the symbol \enquote{$\sqcap$} (Bascelli et al. 2016).} In contemporary terms, he makes a factorization by this relation. Nevertheless, Bolzano still calls them measurable numbers, which is slightly confusing. He introduces the \emph{ordering} of measurable numbers: $P > S$, if their difference is positive and not infinitely small. Bolzano shows that measurable numbers defined in this way have all usual properties of real numbers: it is an \emph{Archimedean, dense, linearly ordered field}, where the \emph{Bolzano-Cauchy theorem}\footnote{This theorem in particular was essential. As early as 1817 in \emph{Purely Analytical Proof} Bolzano defined a Bolzano-Cauchy sequence and wrote that such a sequence has a limit. He proved its uniqueness, but nevertheless he could not prove its existence. There was no theory of real numbers.} and the \emph{supremum theorem} is valid.\footnote{Of course Bolzano does not use these terms. %Archimedean, dense, linearly ordered field, Bolzano-Cauchy sequence, supremum theorem. 
We write them as commonly known designations. 
Bolzano describes the properties explicitly, for instance the \emph{Archimedean property}: 
\begin{quote} If $A$ and $B$ are a pair of measurable and finite numbers, both of which we also consider positive or absolute, then there is always some multiple of one, which is \emph{greater} than other, and some aliquot part of the same one which is \emph{smaller} than other one. (Bolzano 1976, \S 74) or (Russ 2004, p.399). \end{quote} 
Bolzano formulated and proved all substantial properties of real numbers.} 

\subsection{Interpretation of measurable numbers}

If we wish to demonstrate the consistency of measurable numbers, we interpret infinite quantity expressions as sequences of partial results, so they correspond to sequences of \emph{rational numbers}. It is easy to prove that measurable numbers correspond to Bolzano-Cauchy sequences, infinitely small quantities to sequences converging to 0 and infinitely great quantities to divergent sequences. Now, we can proceed in one of two ways. We can either follow the standard Cantor's construction of real numbers or we can compare it with the non-standard construction of real numbers. 

Cantor defines that two Bolzano-Cauchy (fundamental) sequences of rational numbers are equal if their difference is a sequence converging to 0. Bolzano defines that two measurable numbers are equal if their difference is infinitely small. The meaning is the same. Consequently measurable numbers with equality has the same structure as Cantor's real numbers. It is  formally suitable, but infinitely small numbers are lost in this interpretation.

The Bolzano's construction of measurable numbers is somewhat similar to the non-standard construction of real numbers.\footnote{
There is a nice construction of \emph{real} numbers from \emph{rational} numbers by means of non-standard analysis: we proceed the usual way, but start from sequences of \emph{rational numbers} $\mathbb Q^\mathbb N$. We employ a non-principal ultrafilter $\mathcal U$ on natural numbers $\mathbb N$ and obtain an ultraproduct $$\mathbb Q^* = \mathbb Q^ \mathbb N/\mathcal U.$$ 

The elements of the ultraproduct are called hyperrational numbers. It is a linearly ordered field, non-Archimedean. We define, in the usual way its finite, infinite and infinitely small elements, and the relation of infinite closeness $\doteq$. Let $\mathbb Q_f$ denote the set of finite elements, $\mathbb Q_i$ the set of infinitely small elements. 
$$\mathbb Q_i \subseteq \mathbb Q_f \subseteq \mathbb Q^*.$$ 
By the factorization of this set by the relation of infinite closeness we obtain the structure isomorphic to real numbers. The result is the same as a factorization modulo $Q_i$. 
$$\mathbb Q_f/\doteq \quad  \cong  \quad \mathbb R.$$
Perhaps this is the most direct way of constructing the reals from rationals. (Albeverio, p.14). Nevertheless this structure is inconvenient for the introduction of the differential and integral calculus. The key \emph{transfer principle} doesn't hold between finite hyperrational numbers and real numbers. Functions defined on real numbers cannot be simply extended on hyperrationals.      
} 
Bolzano does not employ ultrafilters, of course. His measurable numbers (without equality) are sufficient in this case. It is a non-Archimedean commutative ring. 
The set of all infinitely small quantities, corresponding to sequences converging to 0, forms its maximal ideal. 
The introduction of equality of measurable numbers means the factorization by the equality relation, i.e. the factorization of the ring by its maximal ideal. The result is the linearly ordered field, isomorphic to the real numbers.

Bolzano's measurable numbers are inconvenient for the introduction of calculus for the same reason as hyperrational numbers, because the transfer principle fails for them. Bolzano was obviously aware of this fact. He returned to his original idea of building the calculus on the basis of \emph{quantities which can become smaller than any given quantities} which is the idea of the later Weierstassian \enquote{$\epsilon - \delta$ analysis}. 
In the \emph{Theory of Functions} Bolzano defines all the key notions of analysis in this manner. For instance, this is the definition of the continuity of a function in a point $x$. 
\begin{quote} Supposing the value $Fx$ is measurable, as well as the value $F(x + \Delta x)$\footnote{\rm For Bolzano, $\Delta x$ is simply the difference between $x$ and any other modified value (Russ 2004, p. 438).},
but the difference $F(x + \Delta x) - Fx$, in its absolute value, becomes and remains smaller than any given fraction $1 \over N$, providing only that $\Delta x$ is taken small enough then I say that the function Fx varies continuously for the value $x$.  (Russ 2004, p. 448). \end{quote}

While Bolzano concentrated on the arithmetization of continuum and the construction of measurable numbers in \emph{Infinite Quantity Concepts}, in \emph{Paradoxes of the Infinite} he investigated mainly the sums of infinite series of measurable numbers.  

While we interpreted Bolzano's infinite quantity expressions as sequences of \emph{rational numbers} we will justifiably interpret Bolzano's infinite sums as sequences of \emph{real numbers}.

\section{Interpretation of Bolzano's infinite quantities}

\subsection{Sequences}

Bolzano's infinite series are interpreted as infinite sequences of partial sums, i.e. as elements of $\mathbb R^\mathbb N$. But we will keep in mind that these sequences represent single, exactly given \emph{quantities}. There is a slight conceptual distinction between treating elements of $\mathbb R^\mathbb N$ as Bolzano's quantities, and treating them as classical sequences. The two viewpoints are formally equivalent, but can lead to a somewhat different way of thinking about such objects. This is similar to the case of non-standard analysis, where we treat elements of ultraproducts as single quantities.

It is easy and unambiguous to pass from series to sequences and back. For every series there is exactly one corresponding sequence and vice versa.

\begin{definition} Let $a_1 + a_2 + a_3 + \dots + \text{in inf.}$ be the Bolzano series of real numbers, $a_i \in \mathbb R.$ It corresponds to the sequence $(s_1, s_2, s_3, \dots ) \in \mathbb R^\mathbb N$ where for all $n \in \mathbb N, s_n = a_1 + \dots + a_n.$
$$a_1 + a_2 + a_3 + \dots + \text{in inf.} \sim (s_1, s_2, s_3, \dots ) = (s_n),$$ 
\end{definition}
\begin{example}
\begin{enumerate}
\item $N = 1 + 1 + 1 + 1 + \dots + \text{in inf.} \sim (1, 2, 3, \dots) = (n)$ 
\item $P = 1 + 2 + 3 + 4 + \dots + \text{in inf.} \sim (1, 3, 6, 10, \dots ) = (\frac {n \cdot (n+1)}{2})$
\item $S = 1 + 4 + 9 + 16 + \dots + \text{in inf.} \sim (1, 5, 16, 32, \dots ) = (\frac{n(n+1)(2n+1)}{6})$
\item $1 + e + e^2 + e^3 + \dots + \text{in inf.} \sim (\frac{1 - e^n}{1 - e})$
\item $n = 1 + 1 + \dots + 1 \sim (1, 2 \dots n, n, n, \dots)$
\item $N_n = \dots 1 + 1 + 1 + + \dots + \text{in inf.} \sim (0, 0, \dots 1, 2, 3, \dots )$
\item $r \cdot N = r + r + r + \dots + \text{in inf.} \sim (r, 2r, 3r, \dots) = (nr) $ where $r \in \mathbb R$.\footnote{We use the symbol $r$ for a real number instead of Bolzano's symbol $\alpha$ for quantities.}
\end{enumerate}
\end{example}

\begin{remark}
Sequences express well the difference between the Bolzano series $N$ and $N_n$.
\end{remark}

\begin{remark}\label{embedding}
There is a natural interpretation of any real number $r \in \mathbb R$ as a constant sequence
$$r \sim (r) = \bf{r}$$
So real numbers $\mathbb R$ can be naturally embedded in sequences of real numbers. $$\mathbb R \longrightarrow \mathbb R^\mathbb N.$$ 
\end{remark}

We saw how Bolzano counted up the sums of two series. He counted up the corresponding terms. There is no example of multiplication of series in the \emph{Paradoxes of the Infinite}, nevertheless, we can suppose that their product should be the extension of the product of a finite amount of terms. 

\begin{definition} Let $(a_n), (b_n)$ be two sequences of real numbers. We define their sum and their product pointwise:
$$(a_n) + (b_n) = (a_n + b_n).$$
$$(a_n) \cdot (b_n) = (a_n \cdot b_n).$$ 

\end{definition}
\begin{remark}
If there are two Bolzano series $$a_1 + a_2 + a_3 + \dots + \text{in inf.} \sim (a_1, a_1+ a_2, a_1+ a_2+a_3, \dots)$$
$$b_1 + b_2 + b_3 + \dots + \text{in inf.} \sim (b_1, b_1+ b_2, b_1+ b_2+b_3, \dots)$$
then their sum and the product correspond to the sum and the product of sequences
$$(a_1 + a_2 + a_3 + \dots + \text{in inf.})+(b_1 + b_2 + b_3 + \dots + \text{in inf.}) \sim (a_1 +b_1 , a_1+a_2+b_1+ b_2 , a_1+ a_2+a_3+b_1+ b_2+b_3, \dots)$$
$$(a_1 + a_2 + a_3 + \dots + \text{in inf.}) \cdot (b_1 + b_2 + b_3 + \dots + \text{in inf.}) \sim (a_1 \cdot b_1 , (a_1+a_2) \cdot(b_1+ b_2), (a_1+ a_2+a_3) \cdot (b_1+ b_2+b_3), \dots)$$
\end{remark}

\begin{remark}
We will interpret the quantity that is designated by $N^2 = N + N + \dots + \text{in inf.}$ as 
$$N^2 = N \cdot N \sim (1, 2, 3, \dots) \cdot (1, 2, 3, \dots) = (n^2).$$ 
\end{remark}

\subsection{Equality and ordering}

Sequences of real numbers with pointwise operations of addition and multiplication form the ring $\mathbb R ^{\mathbb N}$, i.e. the countably infinite product of copies of the field $\mathbb R$. But this ring is not suitable for our purposes directly; we cannot define infinitely small and infinitely great quantities. In order to avoid this problem, we will not deal with all the sequences in $\mathbb R ^{\mathbb N}$ individually, but identify them by an equality relation respecting the arithmetic operations. 

%Algebraically, this leads to the notion of a factor-ring of the algebra $A$. 

We can interpret the Bolzano's conditions of commutativity and associativity of the series like this: If we change the order of the finite amount of terms of the series, the sum of the series does not change. Therefore, we define that two sequences are \emph{equal} if their terms are the same from a sufficiently large index. The \emph{order} is defined on the same principle. In fact, we use the Fr\'{e}chet filter here that is the filter containing all complements of finite sets.

\begin{definition} Let $(a_n), (b_n)$ be two sequences of real numbers. We define their equality and order
$$(a_n) =_\mathcal F (b_n) \text{ if and only if } (\exists m)(\forall n)(n > m \Rightarrow a_n = b_n).$$
$$(a_n) <_\mathcal F (b_n) \text{ if and only if }(\exists m)(\forall n)(n > m \Rightarrow a_n < b_n).$$

\end{definition}

The following definitions are in accordance with Bolzano's own words in \emph{Infinite Quantitative Concepts}, see \ref{measurable}. \emph{Infinite closeness} of two sequences corresponds to Bolzano's equality (equivalence) of two measurable numbers.
\begin{definition}
Let $(a_n), (b_n)$ be two infinite series. Then $(a_n)$ is \emph{infinitely smaller} than $(b_n)$ or $(b_n)$ is \emph{infinitely greater} than $(a_n)$

$$(a_n) << (b_n) \text{ if and only if }(\forall k)(a_n) <_\mathcal F k \cdot (b_n) $$ 
%$$\text{ i.e.}(\forall k)(\exists m)(\forall n)(n > m \Rightarrow a_n < k \cdot b_n)$$
where $k, n \in \mathbb N$. 
\end{definition}

%Moreover we define \emph{infinitely small} and \emph{infinitely great} sequences in accordance with Bolzano's definitions 

\begin{definition} Let $x, y$ be elements of $\mathbb R^\text{o}$. 
\begin{enumerate}[(i)]

\item $(a_n)$ is \emph{infinitely small} if and only if  $(a_n) << \bf{1}.$
\item $(a_n)$ is \emph{infinitely great} if and only if  $(a_n) >> \bf{1}.$
\item $(a_n)$ and $(b_n)$ are \emph{infinitely close}, $(a_n) \doteq (b_n)$ if and only if $(a_n - b_n)$ is infinitely small. 
\end{enumerate}
\end{definition}

\begin{theorem} Bolzano's theorems mentioned here \ref{n}, \ref{e}, \ref{S>P}, \ref{pomer} are valid in this interpretation. \end{theorem}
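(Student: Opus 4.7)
The plan is to verify each theorem by translating the Bolzano series into its sequence of partial sums (already tabulated in the Examples following the definition of $\sim$) and checking the claim against the appropriate notion in this setting: equality $=_{\mathcal{F}}$, ordering $<_{\mathcal{F}}$, infinite comparison $<<$, or infinite closeness $\doteq$. In each case the verification reduces to elementary estimates on polynomials in $n$ or geometric tails $e^n$, so the work is routine; what matters is selecting the correct notion of equality in each statement.

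For Theorem \ref{n} I would start from $N \sim (n)$ and $N_n \sim (0, \dots, 0, 1, 2, 3, \dots)$ (with $n$ leading zeros); subtracting pointwise gives $N - N_n \sim (1, 2, \dots, n, n, n, \dots)$, a sequence that coincides with the constant sequence $\mathbf{n} = (n, n, \dots)$ from index $n$ onwards, so $N - N_n =_{\mathcal{F}} \mathbf{n}$, which under the embedding of Remark \ref{embedding} is the real number $n$. Theorem \ref{S>P} proceeds analogously: from $P \sim (n(n+1)/2)$ and $S \sim (n(n+1)(2n+1)/6)$ one obtains $S - P \sim (n(n+1)(n-1)/3)$, whose entries are strictly positive from $n = 2$, giving $P <_{\mathcal{F}} S$; and for any fixed $k$, $S - kP \sim (n(n+1)(2n+1-3k)/6)$ is eventually positive (for $n > (3k-1)/2$), so $kP <_{\mathcal{F}} S$ for every $k$, i.e.\ $S >> P$.

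For Theorem \ref{e} I would emphasize that the relevant relation is $\doteq$, as the footnote to the series $1/2 + 1/4 + \cdots$ explicitly states. The partial sums are $(1 - e^n)/(1 - e)$, and their difference with the constant sequence embedding $1/(1-e)$ is $-e^n/(1-e)$; since $0 < e < 1$, for every $k \in \mathbb{N}$ the inequality $e^n/(1-e) < 1/k$ holds for all sufficiently large $n$, so this difference sequence is infinitely small and the two expressions are infinitely close. For Theorem \ref{pomer}(1), the proportion $(\alpha N) : (\beta N) = \alpha : \beta$ amounts to the pointwise identity $\beta \cdot (n\alpha) = \alpha \cdot (n\beta)$, which is immediate; for parts (2) and (3), the differences $N^2 - kN \sim (n(n-k))$ and $N^3 - kN^2 \sim (n^2(n-k))$ are strictly positive from index $n = k+1$, yielding $N^2 >> N$ and $N^3 >> N^2$.

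The only genuinely delicate point is using the correct notion of equality in Theorem \ref{e}: under $=_{\mathcal{F}}$ the two sides are not equal, since no partial sum ever attains $1/(1-e)$, so one must invoke the weaker relation $\doteq$, in accordance with Bolzano's own convention of identifying two measurable numbers whose difference is infinitely small. All remaining verifications reduce to checking that a polynomial in $n$ is eventually positive, which follows from its leading-coefficient behaviour.
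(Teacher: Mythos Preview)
Your proposal is correct and follows essentially the same approach as the paper's own proof: both translate each Bolzano series into its sequence of partial sums and verify the relevant relation ($=_{\mathcal F}$, $<_{\mathcal F}$, $<<$, or $\doteq$) by elementary inequalities on polynomials in $n$ or the geometric tail $e^n$. Your explicit remark that Theorem~\ref{e} requires $\doteq$ rather than $=_{\mathcal F}$ matches the paper exactly, and your computations (e.g.\ the threshold $n>(3k-1)/2$ for $S>>P$) coincide with those given there.
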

\begin{proof} Let $n \in \mathbb N, r, s \in \mathbb R$. 
\begin{enumerate}
\item $n = N - N_n$ or $n + N_n = N$ , because $$(n, n \dots n, n, n, \dots) + (0, 0, \dots 1, 2, 3, \dots ) =_\mathcal F (1, 2, 3, \dots n, n+1, n+2, \dots) $$
\item $\frac{1}{1 - e} - (\frac{1 - e^n}{1 - e})$ is infinitely small because
$$(\forall q)(\exists n)(\forall m > n) |\frac{1}{1 - e} - \frac{1 - e^n}{1 - e}| = |\frac{e^n}{1 - e}| < \frac{1}{q}. $$
\item $S > P$, because $$n > 1 \Rightarrow \frac{n(n+1)(2n+1)}{6} > \frac{n(n+1)}{2}$$
$S >> P$, because $$(\forall k)(\forall n)(n > \frac{3k}{2}-\frac{1}{2} \Rightarrow \frac{n(n+1)(2n+1)}{6} > k \cdot \frac{n(n+1)}{2})$$ 
\item $(r \cdot N) : (s \cdot N) = r : s$ where $r, s \in \mathbb R$, because
$$(r\cdot n) : (s \cdot n) = r : s$$
$N^2 >> N \sim (n^2) >> (n)$, because $$(\forall k)(\forall n)(n > k \Rightarrow n^2 > k \cdot n).$$ 
$N^3 >> N^2 \sim (n^3) >> (n^2)$, because $$(\forall k)(\forall n)(n > k \Rightarrow n^3 > k \cdot n^2).$$
\end{enumerate}
\end{proof}

\begin{theorem} The structure $\mathbb R^\text{o} = (\mathbb R ^\mathbb N, +, \cdot, =_\mathcal F, <_\mathcal F )$ is a partially ordered non-Archimedean commutative ring. \end{theorem}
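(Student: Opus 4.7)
The plan is to verify the ring axioms, show that $=_{\mathcal{F}}$ is a congruence with respect to the pointwise operations, and then check the order-theoretic properties. The key observation threaded throughout is that the cofinite subsets of $\mathbb{N}$ form a filter, so finitely many ``eventually'' statements can be conjoined into a single one.

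First I would note that $(\mathbb{R}^{\mathbb{N}}, +, \cdot)$ with pointwise operations is already a commutative ring, with zero $\mathbf{0}$, unit $\mathbf{1}$, and all the ring axioms lifting coordinatewise from $\mathbb{R}$. The substantive task is to show that $=_{\mathcal{F}}$ respects the operations: if $(a_n) =_{\mathcal{F}} (a'_n)$ holds beyond index $m_1$ and $(b_n) =_{\mathcal{F}} (b'_n)$ holds beyond $m_2$, then for every $n > \max(m_1,m_2)$ we have simultaneously $a_n + b_n = a'_n + b'_n$ and $a_n b_n = a'_n b'_n$. Equivalently, the sequences that are eventually zero form an ideal $I \subseteq \mathbb{R}^{\mathbb{N}}$, and $\mathbb{R}^{\mathrm{o}}$ is just the quotient ring $\mathbb{R}^{\mathbb{N}}/I$, which is automatically commutative with the inherited zero, unit, negatives, and distributivity.

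Next I would turn to $<_{\mathcal{F}}$. Irreflexivity and transitivity, antisymmetry (one cannot have $a_n < b_n < a_n$ eventually), compatibility with addition, and compatibility with multiplication by an eventually-positive sequence all reduce to the pointwise case combined with the finite-intersection property of the Fréchet filter. Crucially, the order is only \emph{partial}, not linear, because the Fréchet filter is not an ultrafilter: sequences such as $(1,0,1,0,\dots)$ and $(0,1,0,1,\dots)$ are neither $=_{\mathcal{F}}$-equal nor comparable under $<_{\mathcal{F}}$, and in fact their pointwise product is $=_{\mathcal{F}} \mathbf{0}$, so $\mathbb{R}^{\mathrm{o}}$ has zero divisors and is not a field. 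Finally, for non-Archimedeanness I would exhibit the witness $N = (1,2,3,\dots)$: for every natural $k$, the constant sequence $k \cdot \mathbf{1} = (k,k,k,\dots)$ satisfies $k < n$ for all $n > k$, hence $k \cdot \mathbf{1} <_{\mathcal{F}} N$, so $\mathbf{1} \ll N$ and no integer multiple of $\mathbf{1}$ dominates $N$.

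The main obstacle is conceptual rather than computational: one has to resist the reflex (inherited from the ultrapower construction in Robinsonian non-standard analysis) of expecting a linearly ordered field, and instead accept a partially ordered commutative ring with zero divisors. All of the verifications themselves are routine finite-intersection arguments on top of the pointwise ring and order structure of $\mathbb{R}$; the only delicate point is pinpointing which classical features — totality of the order, absence of zero divisors, Archimedean property — actually fail, and why each failure traces back precisely to the fact that $\mathcal{F}$ is merely a filter rather than an ultrafilter.
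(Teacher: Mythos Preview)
Your proposal is correct and in fact considerably more thorough than the paper's own argument. The paper's proof is essentially a bare list: it states that the eight ring axioms together with irreflexivity and transitivity of $<_{\mathcal F}$ ``are clearly valid'' for all $a,b,c \in \mathbb R^{\text{o}}$, and stops there. It does not separately verify that $=_{\mathcal F}$ is a congruence, does not discuss compatibility of the order with the operations, and --- notably --- does not include the non-Archimedean part inside the proof at all (the witness $N$ appears only in the surrounding text).

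Your route differs in two genuine ways. First, you package the congruence verification as ``eventually-zero sequences form an ideal, so $\mathbb R^{\text{o}} = \mathbb R^{\mathbb N}/I$'', which is cleaner than checking each axiom and makes the ring structure automatic. Second, you explicitly supply what the paper leaves implicit: the failure of linearity via the incomparable pair $(1,0,1,0,\dots)$ and $(0,1,0,1,\dots)$, the presence of zero divisors, and the non-Archimedean witness $N=(n)$. The trade-off is that the paper's approach is maximally elementary (no quotient-ring language), while yours buys conceptual clarity and actually proves every clause of the theorem statement. Both are valid; yours would be the more complete write-up.
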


\begin{proof}
For all elements $a,b, c \in \mathbb R^\text{o}$ the following properties are clearly valid
\begin{enumerate}[(1)] 
\item $(a + b) + c =_\mathcal F a + (b + c)$ \quad associativity of $+$ 
\item $a + b =_\mathcal F b + a$, \quad commutativity of $+$ 
\item $a + o =_\mathcal F a$, \quad there is a neutral element with respect to $+$, where $o = (0,0,0, \dots)$
\item $(\exists b)(a + b =_\mathcal F 0)$, \quad existence of an inverse element, where $b = -a$.
\item $(a \cdot b) \cdot c =_\mathcal F a \cdot (b \cdot c)$, \quad associativity of $\cdot$ 
\item $a \cdot b =_\mathcal F b \cdot a$, \quad commutativity of $\cdot$ 
\item $(\forall a)(a \cdot i =_\mathcal F a)$, \quad there is a neutral element with respect to $+$, where $\cdot$, where $i = (1,1,1, \dots)$.
\item $a \cdot (b + c) =_\mathcal F (a \cdot b) + (a \cdot c)$, \quad distributivity.
\item $\neg (a <_\mathcal F a)$, \quad irreflexivity
\item $((a <_\mathcal F b \wedge b <_\mathcal F c) \Rightarrow a <_\mathcal F c)$, \quad transitivity. 
\end{enumerate}
\end{proof}
Elements of $\mathbb R^\text{o}$ correspond to Bolzano's series. They represent finite, infinitely great and infinitely small quantities.

But sequences corresponding to alternating series are not expelled, for example 
$$1 - 1 + 1 - 1 + \dots \text{in inf.} \sim (1, 0, 1, 0, \dots)$$
$$- 1 + 1 - 1 + 1 \dots \text{in inf.} \sim (0,1, 0, 1,  \dots)$$

Such oscillating sequences have no inverse elements with respect to multiplications, they are divisors of zero and they are not ordered.  Consequently $\mathbb R^\text{o}$ is neither a field nor an integer domain and it is not linearly ordered.

\subsection{Polynomials}

If we look for a consistent interpretation just for infinite quantities, there is a criterion that would expel oscillating sequences and still it would be a ring, even a linearly ordered integral domain.\footnote{An integral domain is a commutative ring in which there are no non-zero divisors of zero.}

The terms of the series are to be determined by a \emph{rule of formation} that determines the following term from the preceding one. Let's restrict the rule to the use of addition and multiplication of integers. The following term can then be determined by a polynomial function. Consequently, terms of the corresponding sequences are polynomials.

\begin{definition} 
Let $p(n)$ be a polynomial the $k$-th degree. 
$$p(n) = a_0 + a_1 n + \dots a_k n^k,$$
where $a_i \in \mathbb R, i = 1, \dots, k.$ The following sequence is \emph{polynomial}.
$$(p(n)).$$
\end{definition} 

\begin{theorem}
Polynomial sequences with the above defined operations form a non-Archimedean linearly ordered integral domain. 
\end{theorem}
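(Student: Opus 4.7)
The plan is to show that the set of polynomial sequences is a subring of $\mathbb{R}^\text{o}$ and then verify the two extra properties (integral domain and linear order), using throughout the fact that a nonzero real polynomial has only finitely many roots and eventually constant sign.

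First I would note closure. If $p(n)$ and $q(n)$ are polynomials, then so are $p(n)+q(n)$ and $p(n)\cdot q(n)$; hence the polynomial sequences form a subset of $\mathbb{R}^\text{o}$ that is closed under the pointwise $+$ and $\cdot$. The zero element $o=(0)$ and the unit $i=(1)$ are polynomial (constant) sequences, and for any polynomial $p(n)$ the sequence $-p(n)$ is again polynomial. The ring axioms (1)--(8) and the order axioms (9)--(10) from the preceding theorem therefore transfer automatically; so the structure is at least a commutative ring partially ordered by $<_\mathcal{F}$.

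Next I would prove the integral domain property. Suppose $(p(n))$ and $(q(n))$ are polynomial sequences with $(p(n)\cdot q(n)) =_\mathcal{F} o$, i.e.\ $p(n)q(n)=0$ for all $n$ beyond some index $m$. Then the polynomial $p(x)q(x) \in \mathbb{R}[x]$ has infinitely many real roots, which forces $p(x)q(x)\equiv 0$ in $\mathbb{R}[x]$. Since $\mathbb{R}[x]$ is itself an integral domain, either $p\equiv 0$ or $q\equiv 0$, so $(p(n))=_\mathcal{F} o$ or $(q(n))=_\mathcal{F} o$. Thus polynomial sequences have no nontrivial divisors of zero.

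For the linear ordering I would invoke the standard fact that a nonzero polynomial $r(x)\in\mathbb{R}[x]$ has finitely many real roots, and for $n$ larger than all of them the sign of $r(n)$ is constant and equals the sign of its leading coefficient. Apply this to $r(n):=p(n)-q(n)$: exactly one of $r\equiv 0$, $r(n)>0$ eventually, $r(n)<0$ eventually must hold, yielding trichotomy $(p)=_\mathcal{F}(q)$, $(p)>_\mathcal{F}(q)$, or $(p)<_\mathcal{F}(q)$. The compatibility of $<_\mathcal{F}$ with addition and with multiplication by positive elements is immediate from the pointwise nature of both. Finally, non-Archimedeanness is witnessed by $N\sim(n)$: for every $k\in\mathbb{N}$ and every $n>k$ we have $n>k\cdot 1$, hence $k\cdot i <_\mathcal{F} N$, so $N$ majorizes every constant.

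The only step that requires any real argument (rather than bookkeeping from the previous theorem) is the eventual-sign dichotomy for nonzero polynomials; this is the main obstacle, but it reduces to the elementary statement that a polynomial of degree $k$ has at most $k$ real roots, after which trichotomy for $=_\mathcal{F},<_\mathcal{F}$ and the absence of zero divisors both follow at once.
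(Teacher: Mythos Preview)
Your proof is correct and follows essentially the same approach as the paper: closure under the ring operations, no zero divisors because a nonzero polynomial has only finitely many roots, linearity of the order from the eventual constant sign of a nonzero polynomial, and non-Archimedeanness witnessed by $(n)$. The only cosmetic difference is that the paper phrases the linear-order step by identifying $(p(n))>_\mathcal{F}(q(n))$ with the lexicographic comparison of the coefficient tuples $(a_k,a_{k-1},\dots,a_0)$ and $(b_k,b_{k-1},\dots,b_0)$, whereas you argue directly via the eventual sign of $p(n)-q(n)$; these are equivalent formulations of the same fact, and your version is arguably more transparent about why trichotomy holds.
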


\begin{proof}
 The sum and the product of two polynomial sequences is a polynomial sequence too, because the sum and the product of two polynomials is a polynomial. The product of two polynomial non-zero sequences is non-zero. 

The ordering is linear, because if $p(n) = a_0 + a_1 n + \dots a_k n^k$ and $q(n) = b_0 + a_1 n + \dots b_k n^k$ then $$(p(n)) >_\mathcal F (q(n)) \Leftrightarrow (a_k, a_{k-1}, \dots a_0) > (b_k, b_{k-1}, \dots b_0) \text{ in a lexicographical order.}$$

The constant sequences $(r)$, $r \in \mathbb R$ are polynomial. For example, the sequence $(n) >> (r)$ for any real number $r$.
\end{proof} 

This is a consistent interpretation, but only of \emph{infinitely great} quantities; no polynomial sequences are infinitely small. %scillating sequences are expelled. 

\subsection{Cheap version of non-standard analysis} 

We saw that Bolzano's infinite expressions are not suitable for the foundation of differential calculus although they contain infinitely small numbers. What about infinite quantities introduced in \emph{Paradoxes of the Infinite}?  

Let's return to the above defined structure of sequences of rational numbers $\mathbb R^\text{o}$. 
From now on, we work with the factorization of $\mathbb R ^\mathbb N$ by the Fr\'{e}chet filter $\mathcal F$.\footnote{In fact, it is the same structure as before, only its formal description is a little different. The equality of sequences from $\mathbb R^\mathbb N$ was defined with help of Fr\'{e}chet filter. Now we work with equivalence classes  factorized by the Fr\'{e}chet filter.}  
$$\mathbb R^\text{o} = \mathbb R ^\mathbb N/\mathcal F.$$  

Its elements are equivalence classes, for any $(a_n) \in \mathbb R^\mathbb N$  $$[(a_n)]_\mathcal F \in \mathbb R^0.$$ 

Of course, it is a partial ordered commutative ring too. Real numbers representing Bolzano's measurable numbers (with equality) are naturally embedded in it, see the Remark \ref{embedding} and the Footnote \ref{IQC}. They are linearly ordered and form a field.

Terence Tao described the so-called \enquote{cheap version of non-standard analysis} which is defined on $\mathbb R^0$ (Tao, T. 2012a). It's less powerful than the full version, it's constructive because it doesn't require any sort of axiom of choice and its assertions can be easily rewritten to the standard analysis. %It employs the Fr\'{e}chet filter instead of the non-principal ultrafilter. 
The \emph{transfer principle} is valid only partially. The structure is countably saturated and, consequently, non-Archimedean.

\begin{definition}
Every function $f$ defined on real numbers $f: \mathbb R \longrightarrow \mathbb R$ can be extended to the function $f^*: \mathbb R^\text{o} \longrightarrow \mathbb R^\text{o}$ such that we define for all $x = [(x_n)]_\mathcal F \in \mathbb R^\text{o}$ 
$$f^*([(x_n)]_\mathcal F) = [(f(x_n))]_\mathcal F.$$
\end{definition}

We define basic notions of mathematical analysis, for instance
\begin{definition} (Tao, 2012a).
\begin{itemize}
\item The function $f$ is \emph{continuous}, if and only for all $x \in \mathbb R$ and for all $y \in \mathbb R^\text{o}$ $$x \doteq y \Rightarrow f(x) \doteq f(y)$$
\item The function $f$ is \emph{uniformly continuous}, if and only for all $x, y \in \mathbb R^\text{o}$ $$x \doteq y \Rightarrow f(x) \doteq f(y)$$
\item The function $f$ has a \emph{derivative} $f'(x)$ in $x \in \mathbb R$, if and only if for all infinitely small $h \in \mathbb R^\text{o}$ 
$$f'(x) \doteq \frac{f(x+h)-f(x)}{h}$$ 
\end{itemize}
\end{definition}
Infinitesimal calculus can be partially reconstructed in $\mathbb R^\text{o}$ as we see from the result of Terence Tao. Nevertheless this structure bears some disadvantages, it is not an integral domain and it contains infinitely many useless elements such as $[(1,0,1,0, \dots)]_\mathcal F$. 

\subsection{A full version of non-standard analysis} 

Factorization of the ring $\mathbb R ^{\mathbb N}$ by the Fr\'{e}chet filter leads to the partially ordered ring $\mathbb R^\text{o}$ which is not an integral domain. 

Should the ring $\mathbb R ^{\mathbb N}_{\mathcal F}$ contain no non-trivial zero-divisors such as $[(1,0,1,0 \dots)]_\mathcal F$, the filter $\mathcal F$ has to satisfy the following condition: for each subset $S$ of $\mathbb N$, either $S \in \mathcal F$ or $\mathbb N \setminus S \in \mathcal F$. 

Indeed, if  $1_S$ and $1_{\mathbb N \setminus S}$ denote the characteristic functions of the sets $S$ and $\mathbb N \setminus S$, respectively, then $1_S . 1_{\mathbb N \setminus S} = 0$, so either $1_S$ or $1_{\mathbb N \setminus S}$ has to be equivalent to the zero sequence. In the former case, $S \in \mathcal F$, in the latter $\mathbb N \setminus A \in \mathcal F$. 

In other words, $\mathbb R^\text{o}$ contains no non-trivial zero-divisors, iff $\mathcal F$ is an ultrafilter on $\mathbb N$. If we add a further natural condition of equivalence for each pair of sequences that differ only at finitely many places, we see that the desired factor-rings $\mathbb R^\text{o}$ correspond to non-principal ultrafilters $\mathcal U$ on $\mathbb N$. Notice that the ultraproduct $$\mathbb R^* = \mathbb R^{\mathbb N} /{\mathcal U},$$ is even a field. Moreover, $\mathbb R$ embeds into $\mathbb R^*$ as the subfield of the equivalence classes of constant sequences. The immense bonus obtained thanks to \L{o}\'{s} Theorem, (Chang, Keisler 1992, p. 211.) is that not only $\mathbb R^*$ is a field, but it shares \emph{all} first order properties with the field $\mathbb R$. And it contains the infinitesimals.

The ultraproduct is a consistent mathematical framework for differential and integral calculus which is based on infinitesimals, i.e. infinitely small quantities, as it was traditionally used until the end of the 19th century, and as it was justified in the non-standard analysis. (Robinson 1966).

Ultrafilters and their wonderful properties are the result of modern logic of the 20th century. We need the axiom of choice to prove their existence. Bolzano, of course, could not have known that.

Terence Tao speaks about \emph{completions} of mathematical structures in (Tao, 2012b, p. 150-152). Every completed structure is much larger, it contains the original one and brings some new benefits. Thus, real numbers represent a \emph{metric} completion of rational numbers. Every Bolzano-Cauchy sequence of rational numbers has a real limit. Complex numbers represent an \emph{algebraic} completion of real numbers. Every algebraic equation has a solution. The non-standard real numbers represent an \emph{elementary} completion of real numbers. Every sequence of statements of first-order logic that are potentially satisfiable are also actually satisfiable here.\footnote{It is also called a countable saturation property.} %We add that we can look at rational numbers as a \emph{rational} completion of natural numbers. 
So, we proceed from natural numbers, step-by-step we complete the structures, and finally we obtain the non-standard real numbers as a basis for the infinitesimal calculus. 

\section{Conclusion}

Bolzano's theory of infinite quantities can be extended to a consistent theory in several distinct ways, depending on what we require from the extended theory. 

If we only wish to create a consistent theory of infinite quantities, we can interpret them as polynomial sequences. This results in a linearly ordered ring of finite and infinite quantities. 

In case we want to avoid ultrafilters, we can define equality and order via the Fr\'{e}chet filter. This approach also yields a compact theory of infinitely small and infinitely large quantities generalizing the real numbers. The resulting structure forms a ring, not a field, and it is only partially ordered, but it makes it possible to reconstruct large parts of the differential and integral calculi. 

Certainly, the most effective and elegant approach is via ultrafilters. We obtain a theory of finite, infinitely great and infinitely small quantities. Infinitely great quantities are inverses of the infinitely small ones and vice versa. The principle that the whole is larger than a part holds true. All of the differential and integral calculi can naturally be developed on this basis, because the extended world containing infinitesimal quantities satisfies the same first order properties as the original setting.

Bolzano's theory provides foundations for a more general theory containing both infinitely great and infinitely small quantities. It justifies Pascal's belief in two types of infinity that reflect each other:
\begin{quote}
Ces deux infinis quoique infiniment differents, sont n\'{e}anmoins relatifs l'un \`a l'autre, de telle sorte que la connaissance de l'un m\`ene n\'{e}cessairement \`a la connaissance de l'autre. \footnote{(Pascal 1866, p. 295).} 
\end{quote}
 
Although Cantor's and Bolzano's approaches to comparing infinite sets are based on different principles, they are not incompatible. They merely describe different facets of infinite sets and could coexist within a single comprehensive theory. The concept of \emph{multitude}, based on \emph{par-whole} principle, as well as the concept of \emph{cardinality}, based on \emph{one-to-one correspondence}. However, I consider the Bolzano's framework more suitable, both for the theory of infinite quantities and as a basis of mathematical analysis.

\begin{acknowledgements}
I am grateful to Mikhail Katz, Jan \v{S}ebest\'{i}k and Petr K\accent23 urka for helpful comments on an earlier version of the manuscript. Many thanks to my colleague Jan Star\'{y} for fruitfull discussions about ultrafilters and to my husband Jan Trlifaj for comments on the final version. 
\end{acknowledgements}

\end{document}